\documentclass[12pt]{amsart}
\usepackage{amsmath, amsthm}
\setlength{\hoffset}{-.5in}
\setlength{\voffset}{-.25in}
\usepackage[english]{babel}
\usepackage[T1]{fontenc}
\usepackage[latin1]{inputenc}
\usepackage{amssymb}
\usepackage{amscd}
\usepackage{latexsym}
\usepackage[bookmarksnumbered,plainpages,hypertex]{hyperref}
\usepackage{graphicx}
\setcounter{MaxMatrixCols}{30}
\usepackage{mathrsfs} 

\textwidth=6.175in
\textheight=8.5in



\newtheoremstyle{theorem}
  {12pt}          
  {12pt}  
  {\sl}  
  {\parindent}     
  {\bf}  
  {. }    
  { }    
  {}     
\theoremstyle{theorem}
\newtheorem{theorem}{Theorem}
\newtheorem{corollary}[theorem]{Corollary}
\newtheorem{remark}[theorem]{Remark}
\newtheorem{proposition}[theorem]{Proposition}
\newtheorem{lemma}[theorem]{Lemma}

\linethickness{0.5pt}



\newcommand{\bZ}{\mathbb{Z}}

\newcommand{\bN}{\mathbb{N}}

\newcommand{\aG}{\alpha}
\newcommand{\bG}{\beta}

\newcommand{\dG}{\delta}

\newcommand{\eG}{\varepsilon}

\newcommand{\sG}{\sigma}



\newcommand{\bds}{\begin{displaystyle}}
\newcommand{\eds}{\end{displaystyle}}

\title[Ramanujan-Nagell equations and perfect numbers.]{Ramanujan-Nagell type equations and perfect numbers.}

\author{Ph. Ellia - P. Menegatti}
\address{Dipartimento di Matematica, 35 via Machiavelli, 44100 Ferrara}
\email{phe@unife.it , paolo.menegatti@student.unife.it}

\subjclass[2010] {11A99} \keywords{Ramanujan-Nagell equations, distance, odd perfect numbers.}

\date{\today}

\begin{document}
\maketitle

\thispagestyle{empty}

\begin{abstract} We prove that if $\dG$ is a triangular number congruent to 3 mod.4, then the equation $x-y=\dG$ has a finite number of solutions with $x,y$ both perfect numbers. We outline a general approach to determine the exact number of solutions and show that there is none for $\dG =3,15$. 
\end{abstract}


\section*{Introduction}
An integer $n \in \bN$ is said to be perfect if $\sG (n)=2n$ where $\sG$ is the sum of divisors function. By results of Euler every even perfect number has the form $n = 2^{p-1}(2^p-1)$ where $2^p-1$ is prime, whereas every odd perfect number is of the form $n = q^{4b+1}.\prod p_i^{2a_i}$, $q,p_i$ distinct primes, $q \equiv 1\,\,(mod\,\,4)$; in particular if $n$ is an odd perfect number, then $n \equiv 1\,\,(mod\,\,4)$. It is still unknown if odd perfect numbers exist (for some recent results see for example \cite{N}, \cite{OR}).

In \cite{LP}, Luca and Pomerance have proved, \emph{assuming} the $abc$-conjecture, that the equation $x-y=\dG$ has a finite number of solutions with $x,y$ perfect, if $\dG$ is odd. Our interest in the distance between two perfect numbers comes from this result and the following obvious remark: if one could prove that an odd integer cannot be the distance between two perfect numbers, then it would follow that every perfect number is even.

From Touchard's theorem (\cite{Touch2}) it follows that an integer $\dG \equiv \pm 1\pmod{12}$ cannot be the distance between two perfect numbers.  In \cite{Ellia} it has been shown that there exist infinitely many odd (triangular) numbers ($\not \equiv \pm 1\pmod{12}$) which cannot be the distance between perfect numbers. In this note, by using results on generalized Ramanujan-Nagell equations, we prove that if $\dG$ is a triangular number $\equiv 3\pmod{4}$, then $x-y=\dG$ has a finite number of solutions with $x,y$ perfect numbers. We also outline a general approach to determine the exact number of solutions. For example we show that $\dG =3, 15$ cannot be the distance between two perfect numbers.  


\section{Ramanujan-Nagell equations and perfect numbers.}

Let $D_1, D_2 \in \bZ$ be non zero integers, then the equation (in $x,n$)
\begin{equation}
\label{eq:RN} 
D_1x^2+D_2 = 2^n
\end{equation} 
is a \emph{generalized Ramanujan-Nagell} equation. Recall the following result of Thue (see for istance \cite{Lj})
\begin{theorem}
\label{THUEfini} 

Let $a,b,c,d \in \bZ$ such that $ad\neq 0$, $b^2-4ac\neq 0$. Then the equation

\begin{equation}
ax^2+bx+c = dy^n
\end{equation}
has only a finite number of solutions in integers $x$ and $y$ when $n\geq3$
\end{theorem}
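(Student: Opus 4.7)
The plan is to reduce the equation $ax^2+bx+c = dy^n$ to a binary Thue equation of degree $n$ over a quadratic number field, and then invoke Thue's classical finiteness theorem for such equations (which rests on his rational approximation bound for algebraic numbers). First I would multiply by $4a$ and complete the square to rewrite the equation as $X^2 - D = e\,y^n$ with $X=2ax+b$, $D=b^2-4ac\neq 0$ and $e=4ad\neq 0$. It suffices to show that this normalized form has finitely many solutions $(X,y)\in\bZ^2$.

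In the degenerate case $D=k^2$, I would factor $(X-k)(X+k)=e\,y^n$ in $\bZ$ directly: the gcd of the two factors divides $2k$, so away from a bounded set of primes each factor is a bounded integer times an $n$-th power, and eliminating $y$ produces a relation $A\,u^n - B\,v^n = C$ with $A,B,C$ bounded. In the generic case ($D$ not a square), I would work in the ring of integers $\oc_K$ of $K=\bQ(\sqrt{D})$ and factor ideals: $(X-\sqrt{D})(X+\sqrt{D}) = (e)(y)^n$. The ideal gcd of the two factors divides $(2\sqrt{D})$, so $(X-\sqrt{D})=\mathfrak{a}\,\mathfrak{b}^n$ with $\mathfrak{a}$ drawn from a finite list; raising to the $h$-th power ($h$ the class number of $K$) pins $\mathfrak{b}$ down to finitely many ideal classes. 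This yields a relation $X-\sqrt{D} = \varepsilon\,\alpha\,\beta^n$ in $\oc_K$ with $\alpha$ fixed from a finite set, $\varepsilon$ a unit, and $\beta=u+v\sqrt{D}$ the unknown. Subtracting the conjugate equation and equating the $\sqrt{D}$-coefficient to $-1$ produces a binary form equation $F(u,v)=\mathrm{const}$ of degree $n\geq 3$ in the integers $u,v$, whose irreducible factors have degree $\geq 3$ as long as $n\geq 3$.

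The finiteness of integer solutions to such a Thue equation is the essential non-elementary ingredient (Thue's bound on how well an algebraic number can be approximated by rationals). The main obstacle in the setup is the infinitude of $\oc_K^{\times}$ when $D>0$: this is neutralized by the observation that $\oc_K^{\times}/(\oc_K^{\times})^n$ is finite (Dirichlet's unit theorem), so only finitely many residue classes of the unit $\varepsilon$ need be considered, any remaining $n$-th-power factor being absorbed into $\beta^n$. Everything else is bookkeeping over the finite list of fixed parameters $(\mathfrak{a},\varepsilon,\alpha)$ and the verification that each resulting form factors into pieces of degree $\geq 3$.
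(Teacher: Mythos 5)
The paper does not actually prove this statement: it is quoted as a classical theorem of Thue, with \cite{Lj} offered as a reference, and is used purely as a black box to derive Corollary \ref{C-fini}. So there is no in-paper proof to compare against; what you have written is essentially the standard classical argument (complete the square to reach $X^2-D=ey^n$ with $X=2ax+b$, $D=b^2-4ac$; factor over $\bZ$ when $D$ is a square and over $\bQ(\sqrt D)$ otherwise; control the ideal gcd by $(2\sqrt D)$, the class group by its finiteness, and the units by the finiteness of $\oc_K^{\times}/(\oc_K^{\times})^n$; reduce to a Thue equation), and its architecture is sound. One point does need correcting: the final binary form $Q(u,v)$, obtained as the $\sqrt D$-coefficient of $\alpha(u+v\sqrt D)^n$, need \emph{not} have all of its $\bQ$-irreducible factors of degree $\geq 3$ --- for instance when $\alpha$ is rational the form is divisible by $v$ --- so the criterion you propose to verify at the end is false in general. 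What is true, and what the applicable version of Thue's theorem actually requires, is that $Q$ possess at least three pairwise non-proportional linear factors over $\bC$: its projective roots are the preimages of the $n$ distinct $n$-th roots of $\bar\alpha/\alpha$ under the injective M\"obius map $t\mapsto (t+\sqrt D)/(t-\sqrt D)$, hence are $n$ distinct points of $\mathbb{P}^1$, at most one of which lies at infinity, leaving at least $n-1\geq 3$ distinct linear factors in every case (note $n-1\geq 3$ whenever the degree drops, since that forces $n$ even). With the hypothesis of Thue's theorem stated in that form, and with the small bookkeeping that the constants $2k$, $A$, $B$ in the square-discriminant case are nonzero away from the finitely many solutions with $y=0$, your reduction goes through.
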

Applying this result to $D_1x^2+D_2 = dy^3$, $d=1,2,4$, we conclude that

\begin{corollary}
\label{C-fini}
For $n\geq 3$, equation (\ref{eq:RN}) has a finite number of solutions $(x,n)$.
\end{corollary}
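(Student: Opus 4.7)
The plan is to reduce equation (\ref{eq:RN}) to a finite number of Thue-type equations by separating the exponent $n$ modulo $3$. For any solution with $n\geq 3$, write $n=3k+r$ with $r\in\{0,1,2\}$ and $k\geq 1$. Setting $y=2^k$ and $d=2^r$, equation (\ref{eq:RN}) becomes
\[
D_1 x^2 + D_2 = d y^3,
\]
which is of the form appearing in Theorem \ref{THUEfini} with $a=D_1$, $b=0$, $c=D_2$ and exponent $3$.

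Next I would check that the hypotheses of Theorem \ref{THUEfini} are satisfied in each of the three cases $d\in\{1,2,4\}$. We have $ad=D_1\cdot 2^r\neq 0$ because $D_1\neq 0$ by assumption, and $b^2-4ac=-4D_1D_2\neq 0$ because both $D_1$ and $D_2$ are non-zero. Therefore Theorem \ref{THUEfini} applies and yields, for each fixed $r\in\{0,1,2\}$, only finitely many integer solutions $(x,y)$.

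To conclude, I would observe that every solution $(x,n)$ of (\ref{eq:RN}) with $n\geq 3$ corresponds, through the substitution above, to an integer solution $(x,y)$ of one of the three equations $D_1x^2+D_2=2^r y^3$ in which $y=2^k$ is a positive power of $2$. Since $k$ (and hence $n=3k+r$) is uniquely determined by such a $y$, the finite set of Thue solutions produces only finitely many pairs $(x,n)$ in each residue class $r$, and the union over $r\in\{0,1,2\}$ is still finite.

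There is no real obstacle here: the argument is essentially a bookkeeping trick reducing a Ramanujan--Nagell problem to three Thue equations of degree $3$. The only points requiring care are the verification that the discriminant condition $b^2-4ac\neq 0$ holds (which follows immediately from $D_1,D_2\neq 0$) and the remark that restricting the $y$'s supplied by Thue's theorem to powers of $2$ can only shrink the already finite solution set.
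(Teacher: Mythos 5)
Your proof is correct and follows the paper's argument exactly: the paper likewise writes $2^n=d\,y^3$ with $d\in\{1,2,4\}$ and $y$ a power of $2$, and invokes Theorem \ref{THUEfini}, the discriminant condition being $-4D_1D_2\neq 0$. The only difference is that you spell out the bookkeeping (the split $n=3k+r$ and the recovery of $n$ from $y$) which the paper leaves implicit.
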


 An odd perfect number $n$ is $\equiv 1\pmod{4}$, while an even one, $m$, is $\equiv 0\pmod{4}$ except if $m=6$. It follows, for odd $\dG$, that $\dG \equiv 1\pmod{4}$ if $\dG = n-m$ or $\dG \equiv 3\pmod{4}$ if $\dG = m-n$ or $\dG +6=n$. Using the result above we get:

\begin{theorem}
\label{T-dG=3 fini}
Let $\dG =b(b-1)/2$ be a triangular number such that $\dG \equiv 3\pmod{4}$. Then the equation $x-y = \dG$ has a finite number of solutions with $x,y$ both perfect.
\end{theorem}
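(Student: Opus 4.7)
The plan is to reduce $x - y = \dG$ to finitely many generalized Ramanujan--Nagell equations of the form (\ref{eq:RN}) and invoke Corollary \ref{C-fini}.

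I would begin with a parity argument. Since $\dG \equiv 3\pmod{4}$, and an odd perfect number is $\equiv 1\pmod{4}$ while an even perfect number distinct from $6$ is $\equiv 0\pmod{4}$, only two configurations of $(x,y)$ are consistent: either $y = 6$ and $x = \dG + 6$ is a single candidate odd perfect number, or $x = 2^{p-1}(2^p - 1)$ is an even perfect number with $p \geq 3$ and $y$ is odd perfect. It suffices to bound the number of $p$ in the second case.

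For this, I would combine the triangular identity $8\dG + 1 = (2b-1)^2$ with $8x + 1 = (2^{p+1} - 1)^2$ to obtain
\[
8y \,=\, (2^{p+1} - 1)^2 - (2b-1)^2 \,=\, 4(2^p - b)(2^p + b - 1),
\]
so that $2y = uv$ with $u = 2^p - b$ and $v = 2^p + b - 1$. Writing $y = qN^2$ with $q$ the Euler prime gives $uv = 2qN^2$. Because $u + v = 2^{p+1} - 1$ and $v - u = 2b - 1$, $\gcd(u,v)$ is an odd divisor $e$ of $2b-1$; and exactly one of $u,v$ is twice an odd integer while the other is odd, with the parity of $b$ determining which. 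A comparison of square-free parts in $uv = 2qN^2$ then shows $q \nmid e$ and forces $(u,v)$, up to which coordinate is even, to have the shape $(eX_1^2,\, 2qeX_2^2)$ or $(qeX_1^2,\, 2eX_2^2)$. In every case one of $u,v$ carries no factor of $q$ and equals $eX^2$ or $2eX^2$; substituting into $u = 2^p - b$ or $v = 2^p + b - 1$ yields
\[
D_1 X^2 + D_2 \,=\, 2^p, \qquad D_1 \in \{e,\, 2e\}, \ \, D_2 \in \{b,\, 1-b\},
\]
both nonzero since $b \geq 3$. As $e$ ranges over the finitely many divisors of $2b - 1$, this is a finite family of equations of type (\ref{eq:RN}); by Corollary \ref{C-fini}, each has only finitely many solutions with $p \geq 3$, while $p \leq 2$ forces $x \leq 6$ and is handled directly. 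Hence only finitely many $p$ occur, and $y = x - \dG$ is then determined.

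The main obstacle will be the square-free bookkeeping: one must verify that the Euler prime $q$, a priori unknown and possibly itself a divisor of $2b - 1$, always isolates on one side of $uv = 2qN^2$, so that the complementary factor has the form $D_1 X^2$ with $D_1$ in a controlled finite set not involving $q$. Once this is established, the quantitative count (and in particular the treatment of $\dG = 3, 15$) will amount to solving the resulting concrete Ramanujan--Nagell equations.
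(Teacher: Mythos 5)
Your proposal is correct and follows essentially the same route as the paper: it reduces to the even-minus-odd case, obtains the factorization $2y=(2^p-b)(2^p+b-1)$ (your difference-of-squares derivation via $8x+1$ and $8\dG+1$ is just a repackaging of the paper's direct expansion), uses Euler's form $y=qN^2$ and the fact that $\gcd$ of the two factors divides $2b-1$ to conclude that one factor is a square-free divisor of $2(2b-1)$ times a square, and then invokes Corollary \ref{C-fini} for the resulting finite family of Ramanujan--Nagell equations. The only quibble is your side claim that $q\nmid e$, which is neither needed nor obviously true; what matters (and what the paper uses) is that the square-free kernel of the factor with even $q$-adic valuation divides $2(2b-1)$, a fixed finite set independent of the unknown Euler prime.
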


\begin{proof} We may assume $m-n=\dG = b(b-1)/2$, with $m,n$ perfect numbers and $m =2^{p-1}(2^p-1)$. Then we have:
\begin{equation}
\label{eq:pf prop}
2n = (2^p-1+b)(2^p-b)
\end{equation}
Moreover by Euler's theorem $n = q^{4b+1}\prod p_i^{2a_i}$, $q,p_i$ distinct primes, $q \equiv 1\pmod{4}$.

Since $(2^p-1+b, 2^p-b) = (2^p-1+b, 2b-1)$, if a prime, $p$, divides both $A = 2^p-1+b$ and $B=2^p-b$, it must divide $2b-1$. In any case we can write $A = p^\eG.A'$, $\eG \in \{0,1\}$ and $p^{2\aG} \parallel A'$. Similarly $B = p^{e}.B'$, $e \in \{0,1\}$, $p^{2\bG}\parallel B'$. It turns out that $A$ or $B$ is of the form: $d$ times a square, where $d$ is a (square free) divisor of $2(2b-1)$. So $2^p = dC^2-b+1$ or $2^p = dD^2+b$. By Corollary \ref{C-fini} each equation $dx^2 + D_2 = 2^n$ ($D_2 = -b+1$ or $b$) has a finite number of solutions. Since $2(2b-1)$ has a finite number of divisors we are done.  
\end{proof}

\begin{remark} As far as $\dG = b(b-1)/2$ congruent to $3$ mod. $4$, in order to show that $\dG$ can't be the distance between two perfect numbers one has:\\
(1) to show that $\dG +6$ is not perfect.\\
(2) for any square free divisor $d$ of $2(2b-1)$ to solve the equations: $dx^2+D_2 = 2^n$ ($D_2 \in \{-b+1, b\}$ (see proof of Theorem \ref{T-dG=3 fini}). For any solution $(x,n)$ such that $n=p$ is prime, check if $2^p-1$ is prime. If it is, check if $2^{p-1}(2^p-1)-\dG$ is perfect.
\end{remark}

Since a great deal is known on the generalized Ramanujan-Nagell equations (see \cite{SaS} for a survey), in the practice, for a given $\dG$, the above procedure should allow to conclude (see also \cite{Mi}, \cite{PeW} for an algorithmic approach).
Sometimes it is possible to go faster, for example:

\begin{proposition} The equation $x-y = 15$ has no solutions with $x,y$ both perfect numbers.
\end{proposition}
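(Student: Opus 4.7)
The plan is to apply the framework of Theorem~\ref{T-dG=3 fini} specialized at $b = 6$ and then eliminate each of the resulting Ramanujan--Nagell equations by a congruence modulo~$8$. First I would do the parity reduction: by the discussion preceding Theorem~\ref{T-dG=3 fini}, if $x-y = 15 \equiv 3 \pmod 4$ with $x,y$ perfect, then either $x=m$ is even perfect, $y=n$ is odd perfect and $m-n=15$, or else $m=6$ and $n = \dG+6 = 21$. Since $\sG(21) = 32 \ne 42$, the second possibility is ruled out, leaving $m = 2^{p-1}(2^p-1)$ with $p \ge 3$.

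Following the proof of Theorem~\ref{T-dG=3 fini}, one gets $n = (2^p+5)(2^{p-1}-3)$. Set $A = 2^p + 5$ and $B_0 = 2^{p-1} - 3$, so that $\gcd(A,B_0)$ divides $2b-1 = 11$. Because $11 \equiv 3 \pmod 4$, the prime $11$ cannot be the ``special'' Euler prime of the odd perfect $n$, and so $v_{11}(n)$ is even. The dichotomy in the proof of Theorem~\ref{T-dG=3 fini} then forces at least one of the four equations
\[ 2^p + 5 = C^2, \quad 2^p + 5 = 11\,C^2, \quad 2^{p-1} - 3 = D^2, \quad 2^{p-1} - 3 = 11\,D^2. \]

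Finally I would reduce each of these modulo $8$. For $p \ge 3$, $2^p \equiv 0 \pmod 8$, and for $p \ge 4$, also $2^{p-1} \equiv 0 \pmod 8$. Since squares modulo $8$ lie in $\{0,1,4\}$, each equation would force either a square to be $\equiv 5 \pmod 8$ or $3$ times a square to be $\equiv 5 \pmod 8$, both impossible. The only exponent left uncovered is $p=3$, where a direct check shows that $2^{p-1}-3 = D^2$ admits only $D=1$, giving $n = 13$ (not perfect), while the other three forms have no integer solution at $p=3$. I do not foresee a serious obstacle: the one subtle step is correctly specializing the dichotomy of Theorem~\ref{T-dG=3 fini}, using $11 \not\equiv 1 \pmod 4$ to pin down the $11$-adic valuation of $n$, after which the rest is a one-line congruence check.
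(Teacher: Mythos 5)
Your proof is correct and follows essentially the same route as the paper: factor $2n=(2^p+5)(2^p-6)$, invoke Euler's form of an odd perfect number to force one cofactor to be $d$ times a square with $d\mid 11$ (since the Euler prime $q\equiv 1\pmod 4$ cannot be $11$), and then kill the resulting Ramanujan--Nagell equations by congruences. The only (harmless) difference is that by working with the odd cofactor $2^{p-1}-3$ you dispose of every case uniformly mod $8$ (plus the direct check at $p=3$ giving $n=13$), where the paper mixes mod $8$, mod $3$, mod $4$ and the known solution set of $2x^2+6=2^n$.
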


\begin{proof} This is the case $b=6$ of Theorem \ref{T-dG=3 fini}. If the Euler prime, $q$, of $n$ divides both $A$ and $B$ it must divide $2b-1 =11$, so $q=11$ which is impossible since $q \equiv 1\pmod{4}$. If $q \mid B$, then $A =2^p+5 = dx^2$, with $d=1$ or $11$. Since $x$ is odd we get $5\equiv d\pmod{8}$ which is absurd. We conclude that $q\mid A$ and $B = 2^p-6=dx^2$, where $d\mid 22$. Reducing mod.3 we see that $d=1$ is impossible. Reducing mod.4 we exclude the cases $d=11, 22$. Finally it is easy to see that the unique solution of $2x^2+6=2^n$ is $(x,n)=(\pm 1,3)$.
\end{proof}

To conclude let us see another example, the case $\dG =3$ which is still open.

\begin{lemma}
\label{L-dG=3}
Assume $m,n$ are perfect numbers such that $m-n=3$. Then $m = 2^{p-1}(2^p-1)$ with $2^p-1$ prime and $2^p= 5u^2+3$ for some integer $u$.
\end{lemma}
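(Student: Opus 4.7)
The plan is to specialize the strategy of Theorem~\ref{T-dG=3 fini} to $b=3$ and then use the Euler form of the odd perfect number to pin $2^p-3$ down to exactly five times a square.

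Since an odd perfect number is $\equiv 1\pmod 4$, the equation $m=n+3\equiv 0\pmod 4$ forces $m$ to be the even one (and $m\neq 6$, else $n=3$ would have to be perfect), so $m=2^{p-1}(2^p-1)$ with $2^p-1$ a Mersenne prime and $p$ an odd prime $\geq 3$. The key computation is
\[
2n \;=\; 2m-6 \;=\; 2^{2p}-2^p-6 \;=\; (2^p+2)(2^p-3),
\]
so, writing $A=2^{p-1}+1$ and $B=2^p-3$ (both odd), we have $n=AB$ and $\gcd(A,B)\mid 2A-B=5$, i.e.\ $\gcd(A,B)\in\{1,5\}$.

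The analysis then relies on the Euler form $n=q^{4k+1}\prod p_i^{2a_i}$ together with two modular facts which I expect to carry most of the weight: (i) $B\equiv 5\pmod 8$ for $p\geq 3$, so $B$ is never a perfect square; and (ii) $A=2^{p-1}+1$ is never a perfect square when $p$ is prime, since $(x-1)(x+1)=2^{p-1}$ with $x$ odd forces $\{x-1,x+1\}=\{2,4\}$ and hence $p=4$. If $\gcd(A,B)=1$, the Euler form makes one of $A,B$ a perfect square and the other $q$ times a square; both possibilities are ruled out by~(i) or~(ii). Therefore $\gcd(A,B)=5$, i.e.\ $p\equiv 3\pmod 4$.

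The delicate final step, which I expect to be the main obstacle, is the sub-case analysis: writing $A=5^\alpha A'$, $B=5^\beta B'$ with $\gcd(A',B')=1$ and $\alpha,\beta\geq 1$, one distinguishes according as $q=5$, $q\mid B'$ (with $q\neq 5$), or $q\mid A'$ (with $q\neq 5$). In the first two sub-cases the Euler form makes $A'$ a perfect square; combined with $A\equiv 1\pmod 8$ this forces $\alpha$ even, and then $A$ itself is a perfect square, contradicting~(ii). In the remaining sub-case $q\mid A'$ the Euler form makes $B'$ a perfect square, so $B=5^\beta\cdot(\text{square})$; combined with $B\equiv 5\pmod 8$ this forces $\beta$ odd, giving $B=5u^2$, equivalently $2^p=5u^2+3$, as claimed.
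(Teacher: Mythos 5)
Your proof is correct and follows essentially the same route as the paper: factor $n=(2^{p-1}+1)(2^p-3)$, show $\gcd(A,B)=5$ by ruling out both $A$ and $B$ being squares, then use the Euler form of $n$ to force $B=5u^2$ (the paper rules out $B$ being a square via $B\equiv 2\pmod 3$ where you use $B\equiv 5\pmod 8$, and organizes the final step by first excluding $q=5$ with a mod $3$ argument, but these are cosmetic differences). The only point to add is a one-line exclusion of $p=3$ (where $n=25$ is not perfect), since your congruence $A\equiv 1\pmod 8$ requires $p\geq 5$ --- the paper glosses over the same point with ``we may assume $p\geq 4$''.
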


\begin{proof} Since $\dG = b(b-1)/2$ with $b=3$ and since $\dG +6=9$ is not perfect, we see that $m$ is even and $n$ is odd. So $m=2^{p-1}(2^p-1)$ with $2^p-1$ prime. Moreover $n = (2^{p-1}+1)(2^p-3)$ (equation (\ref{eq:pf prop}) in the proof of Theorem \ref{T-dG=3 fini}). Also $M=g.c.d.(A,B) = 5$ or $1$ ($A=2^{p-1}+1 $, $B=2^p-3$).

If $M=1$, from Euler's theorem, $A$ or $B$ is a square. Since $A = (2^{(p-1)/2)})^2+1$, $A$ can't be a square. Since $B = 2^p-3 \equiv 2\pmod{3}$, $B$ can't be a square. It follows that $M=5$.

Since $n=m-3$ and $m\equiv 1\pmod{3}$, we get $n \equiv 1\pmod{3}$. We also have $n=qD^2$ ($q\equiv 1\pmod{4}$, the Euler's prime). It follows that $q \equiv 1\pmod{3}$. In particular $q \neq 5$. Finally if $q \nmid A$, then $A = 2^{p-1}+1 = 5C^2$. Since $C$ is odd $C^2\equiv 1\pmod{8}$. Since we may assume $p\geq 4$, we get a contradiction. So $q \mid A$ and $B = 2^p-3 = 5u^2$ for some integer $u$.
\end{proof}

\begin{corollary}
If $\dG = |x-y|$, with $x,y$ perfect numbers, then $\dG > 3$.
\end{corollary}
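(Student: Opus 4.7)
The claim amounts to ruling out $\dG\in\{1,2,3\}$. The first two are elementary. Since $1\equiv 1\pmod{12}$, Touchard's congruence cited in the introduction eliminates $\dG=1$ immediately. For $\dG=2$, I would use the mod-$4$ dichotomy: every perfect number is $\equiv 0\pmod 4$ (the even perfects except $6$) or $\equiv 1\pmod 4$ (any odd perfect). Hence a same-family difference is $\equiv 0\pmod 4$ and a mixed-family difference is odd---in both cases $\not\equiv 2\pmod 4$---and the only residual case, involving $6$, is disposed of by observing that neither $4$ nor $8$ is perfect.

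The substantive case is $\dG=3$. Here I would apply Lemma~\ref{L-dG=3}: any pair $(m,n)$ of perfect numbers with $m-n=3$ forces $m=2^{p-1}(2^p-1)$ with $2^p-1$ prime and $2^p=5u^2+3$ for some $u\in\bN$. It then suffices to solve the generalized Ramanujan-Nagell equation $2^p=5u^2+3$ completely, and for each $(u,p)$ verify directly that $n=m-3=(2^{p-1}+1)\cdot 5u^2$ fails to be perfect. I would attack the equation by multiplying through by $5$ and factoring $(5u+\sqrt{-15})(5u-\sqrt{-15})=5\cdot 2^p$ in the ring of integers of $\bQ(\sqrt{-15})$, using its class number ($2$) and its unit group to pin down the finitely many possibilities for $u$; the expected full list is $(u,p)=(1,3),(5,7)$, giving $n=25$ and $n=8125=5^4\cdot 13$ respectively, and a direct evaluation of $\sG$ shows neither is perfect.

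The main obstacle is converting Corollary~\ref{C-fini}'s qualitative finiteness into an exhaustive and effective list for the specific equation $2^p=5u^2+3$. Local (mod-$m$) arguments in the spirit of the $\dG=15$ proposition will quickly restrict $p$ to an arithmetic progression (e.g.\ odd with $p\equiv 3\pmod 4$) but cannot by themselves close the list. The effective step---whether through the ideal-theoretic route above, a Skolem-style $p$-adic argument, or an appeal to the catalog of Ramanujan-Nagell solutions surveyed in \cite{SaS}---is where the real work lies.
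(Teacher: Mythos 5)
Your proposal is correct and follows essentially the same route as the paper: congruence considerations for $\dG\leq 2$, then Lemma \ref{L-dG=3} to reduce $\dG=3$ to the equation $5x^2+3=2^n$, whose complete solution set $(x,n)=(1,3),(5,7)$ you correctly anticipate, followed by checking that $25$ and $8125$ are not perfect. The only difference is that where you sketch an ideal-theoretic resolution of the Ramanujan--Nagell equation (flagging it as the hard step), the paper simply cites the known complete solution from \cite{Le}, which is the fallback you yourself propose.
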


\begin{proof} The cases $\dG \leq 2$ follow from considerations on congruences. If $\dG =3$, then from Lemma \ref{L-dG=3}: $m-n=3$, where $m=2^{p-1}(2^p-1)$ and $2^p=5u^2+3$. So $(u,p)$ is a solution of the equation $5x^2 + 3 = 2^n$. It is known (\cite{Le}) that the only solutions in positive integers of this equations are $(x,n) = (1,3), (5,7)$. Since $25$ and $2^6(2^7-1)-3 = 8125$ are not perfect numbers, we conclude.
\end{proof}

\medskip

If $\dG$ is not a triangular number congruent to $3\pmod{4}$ we don't have any longer the factorization $2n = (2^p-1+b)(2^p-b)$ and things get harder. The cases $\dG = 5,7$ can be excluded by congruences considerations. However for odd $\dG \leq 15$, the case $\dG=9$ is still open, as is still open the problem to show that a triangular number $\equiv 3\pmod{4}$ can't be the distance between two perfect numbers.




\begin{thebibliography}{RN-PerfectNbs}


\bibitem{Ellia} Ellia, Ph.: {\it On the distance between perfect numbers}, J. of Combinatorics and Number Theory, vol.4, n.2, 105-108 (2012) 



\bibitem{Touch2} Holdener, J.A.: {\it A theorem of Touchard on the form of odd perfect numbers}, Amer. Math. Monthly 109, 661-663 (2002)

\bibitem{Le} Le, M.: {\it On the diophantine equation $D_1x^2+D_2=2^{n+2}$}, Acta Arith. {\bf 64}, 29-41 (1993)

\bibitem{Lj} Ljunggren, W.: {\it On the diophantine equation $Cx^2+D=2y^{n}$}, Math. Scand. {\bf 18}, 69-86 (1966)


\bibitem{LP} Luca, F.-Pomerance, C.: {\it On the radical of a perfect number}, New York J. Math. {\bf 16}, 23-30 (2010)


\bibitem{Mi} Mignotte, M.: {\it On the automatic resolution of certain diophantine equation} in EUROSAM 8, Proceedings, Lecture Notes in Comput. Sci.  {\bf 174}, 378-385 (1984)

\bibitem{N} Nielsen Pace, P.:{\it Odd perfect numbers have at least nine distinct prime factors}, Math. Comp., {\bf 76}, 2109-2126 (2007)

\bibitem{OR} Ochem, P.-Rao, M.: {\it Odd perfect numbers are greater than $10^{1500}$}, Math. Comp., 81, 1869-1877 (2012)

\bibitem{PeW} Petho, A.-de Weger, B.M.M.: {\it Products of prime powers in binary recurrence sequences I: The hyperbolic case, with ana application to the generalized Ramanujan-Nagell equation}, Math. Comp. {\bf 47}, 713-727 (1986)

\bibitem{SaS} Saradha, N.-Srinivasan, A.: {\it Generalized Lebesgue-Ramanujan-Nagell Equations}, in Saradha N. {\it Diophantine Equations}, Narosa Publishing House, New Delhi, India,  207-223 (2007)


\end{thebibliography}
\end{document}